\newtheorem{theorem}{Theorem}[section]
\newtheorem{lemma}[theorem]{Lemma}
\theoremstyle{definition}
\newtheorem{remark}[theorem]{Remark}
\newcommand{\R}{\mathbb R}%
\newcommand{\C}{\mathbb C}%
\newcommand{\Z}{\mathbb Z}%
\newcommand{\N}{\mathbb N}%
\newcommand{\Sc}{\mathcal S}%
\newcommand{\J}{\mathscr J}%
\newcommand{\Jc}{\mathcal J}%
\newcommand{\Hb}{\mathbb H}%
\newcommand{\X}{\mathbb X}%
\numberwithin{equation}{section}
\renewcommand\subsubsection{\@secnumfont}{\bfseries}%
\renewcommand\subsubsection{\@startsection{subsubsection}{3}
  \z@{.5\linespacing\@plus.7\linespacing}{-.5em}%
  {\normalfont\bfseries}}
\begin{document}

\title[Regularity and pointwise convergence]{Regularity and pointwise convergence for dispersive equations on $\mathbb{H}^2$}

\author[Utsav Dewan]{Utsav Dewan}
\address{Stat-Math Unit, Indian Statistical Institute, 203 B. T. Rd., Kolkata 700108, India}
\email{utsav\_r@isical.ac.in\:,\: utsav97dewan@gmail.com}

\subjclass[2020]{Primary 43A85, 22E30; Secondary 35J10, 43A90}

\keywords{Pointwise convergence, Dispersive equations on Riemannian Symmetric Spaces, Helgason-Fourier transform, Spherical function.}

\begin{abstract}
In the prototypical setting of non-Euclidean geometry, the 2-dimensional Real Hyperbolic space $\mathbb{H}^2$, we consider the Carleson's problem for the Schr\"odinger equation and improve the best known result until now by proving that the Sobolev regularity threshold $\beta \ge 1/2$ for the initial data, is sufficient to obtain pointwise convergence of the solution a.e. on $\mathbb{H}^2$. In fact, we prove the same bound for a wide class of dispersive equations that include the fractional Schr\"odinger equations with convex phase, the Boussinesq equation and the Beam equation, also known as the fourth order Wave equation. For the Schr\"odinger equation, we improve the result of Wang-Zhang (Canad J Math 71(4), 983-995, 2019) and for the fractional Schr\"odinger equations with convex phase, we improve the result of Cowling (Lecture Notes Math 992, 83-90, 1983). 
\end{abstract}

\maketitle
%\tableofcontents

\section{Introduction}
One of the most celebrated problems in Euclidean Harmonic analysis is the Carleson's problem: determining the optimal regularity of the initial condition $f$ of the
Schr\"odinger equation given by
\begin{equation} \label{schrodinger}
\begin{cases}
	 i\frac{\partial u}{\partial t} -\Delta_{\R^n} u=0\:,\:\:\:  (x,t) \in \mathbb{R}^n \times \mathbb{R}\:, \\
	u(0,\cdot)=f\:, \text{ on } \mathbb{R}^n \:,
	\end{cases}
\end{equation}
in terms of the index $\beta$ such that for all $f$ belonging to the inhomogeneous Sobolev space $H^\beta(\mathbb{R}^n)$, the solution $u$ of (\ref{schrodinger}) converges pointwise to $f$, 
\begin{equation} \label{pointwise_convergence}
\displaystyle\lim_{t \to 0+} u(x,t)=f(x)\:,\:\:\text{ almost everywhere }.
\end{equation}
In 1980, Carleson studied this problem for $n=1$ in \cite{C} and obtained the sufficient condition $\beta \ge 1/4$ for the pointwise convergence (\ref{pointwise_convergence}) to hold. In 1982, Dahlberg-Kenig \cite{DK} showed that $\beta \ge 1/4$ is also necessary. This completely solved the problem in dimension one and subsequently, posed the question in higher dimensions.

\medskip

In 1983, Cowling \cite{Cowling} studied the problem for a general class of self-adjoint operators on $L^2(X)$ for a measure space $X$  and obtained $\beta > 1$, to be a sufficient condition for the associated Schr\"odinger operator. For $\R^n$, in 1987, Sj\"olin \cite{Sjolin} improved the bound to $\beta > 1/2$, by means of a local smoothing effect. This improvement was also independently obtained by Vega \cite{Vega} in 1988. Then special attention was focused towards the case $n=2$ and developing decoupling techniques, continuous improvements were made \cite{Bourgain1, MVV, Tao, Lee}. These attempts reached fruition recently in 2017, when Du-Guth-Li \cite{DGL} obtained the bound $\beta > 1/3$, for $\R^2$ by using polynomial partitioning and decoupling. Then in 2019, Du-Zhang \cite{DZ} obtained the bound $\beta > n/2(n+1)$, for dimensions $n \ge 3$, by using decoupling and induction on scales. This bound is sharp except at the endpoint $\beta=n/2(n+1)$, due to a counterexample by Bourgain \cite{Bourgain}. Thus in the Euclidean setting, the Carleson's problem has been almost fully resolved.

\medskip

In the case of a Riemannian manifold, where the volume measure neither satisfies any doubling property nor does it admit dilations, the above mentioned Euclidean machineries work no longer and thus it offers a fresh challenge. The prototypical example of such a manifold is the $2$-dimensional Real Hyperbolic space $\Hb^2$. On $\Hb^2$, the Schr\"odinger equation corresponding to the Laplace-Beltrami operator $\Delta$, is given by,
\begin{equation} \label{schrodinger_H2}
\begin{cases}
	 i\frac{\partial u}{\partial t} -\Delta u=0\:,\:\:\:  (x,t) \in \Hb^2 \times \mathbb{R}\:, \\
	u(0,\cdot)=f\:, \text{ on } \Hb^2 \:.
	\end{cases}
\end{equation}
In fact, the equation (\ref{schrodinger_H2}) is just a special case of the family of fractional Schr\"odinger equations with convex phase of degree $a>1$:
\begin{equation} \label{frac_schrodinger}
\begin{cases}
	 i\frac{\partial u}{\partial t} +\left(-\Delta \right)^{\frac{a}{2}} u=0\:,\:\:\:  (x,t) \in \Hb^2 \times \mathbb{R}\:,\\
	u(0,\cdot)=f\:, \text{ on } \Hb^2 \:.
	\end{cases}
\end{equation}

It is natural to study the Carleson's problem for the family of equations (\ref{frac_schrodinger}) in $\Hb^2$, for initial data belonging to the Sobolev spaces:
\begin{equation} \label{Sobolev_space}
H^\beta\left(\Hb^2\right):=\left\{f \in L^2 \left(\Hb^2\right) \mid \left(-\Delta\right)^{\frac{\beta}{2}}f \in L^2 \left(\Hb^2\right) \right\}\:,\:\beta \ge 0\:.
\end{equation}
The only known result in this direction is the sufficiency of the bound $\beta > a/2$ and is due to Cowling \cite{Cowling}. In the particular case of the Schr\"odinger equation (that is, $a=2$) however, Wang-Zhang have recently improved the bound down to $\beta > 1/2$ \cite[Theorem 1.1]{WZ}. 

\medskip

The purpose of this article is to improve the above results down to $\beta=1/2$. In fact, we will obtain the bound $\beta=1/2$, for general dispersive equations of the form,
\begin{equation} \label{dispersive}
\begin{cases}
	 i\frac{\partial u}{\partial t} +\Psi(\sqrt{-\Delta} )u=0\:,\:  (x,t) \in \Hb^2 \times \R\:, \\
	u(0,\cdot)=f\:,\: \text{ on } \Hb^2 \:,
	\end{cases}
\end{equation}
such that the phase function of the corresponding multiplier $\psi(\lambda):=\Psi\left(\sqrt{\lambda^2 + 1}\right)$ is radial, real-valued and its (radial) derivatives satisfy the following high frequency asymptotics for $|\lambda|>1$ and some $a>1$:
\begin{equation} \label{phase_fn_properties}
\begin{cases}
	 |\psi'(\lambda)| \asymp |\lambda|^{a-1}\:,\\
	 |\psi''(\lambda)| \asymp |\lambda|^{a-2}\:,\\
     |\psi'''(\lambda)| \lesssim |\lambda|^{a-3}\:. 
\end{cases}
\end{equation}
We note that for $\Psi(r):= r^a$, with $a>1$, we recover the fractional Schr\"odinger equations with convex phase (\ref{frac_schrodinger}). Some other examples are given by $\Psi(r):= r \sqrt{1+r^2}$ and $\Psi(r):= \sqrt{1+r^4}$ corresponding respectively to the Boussinesq equation and the Beam equation, also known as the fourth order Wave equation (for more details see \cite{FX, GPW}), which often appear in Mathematical Physics.

\medskip

We now present our main result:
\begin{theorem} \label{thm1}
For Dispersive equations (\ref{dispersive}), with phase satisfying (\ref{phase_fn_properties}), the pointwise convergence of the solution to its initial data,
\begin{equation*}
\displaystyle\lim_{t \to 0+} u(x,t)=f(x)\:,
\end{equation*}
holds for almost every $x \in \Hb^2$, whenever the initial data $f \in H^\beta(\Hb^2)$ with $\beta \ge 1/2$.
\end{theorem}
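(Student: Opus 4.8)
The plan is to follow the classical route for Carleson-type problems and reduce the almost-everywhere convergence to a quantitative local maximal estimate. Since smooth functions whose Helgason--Fourier transform has compact support are dense in $H^\beta(\Hb^2)$, a standard application of Stein's maximal principle shows that Theorem~\ref{thm1} follows once we prove, for every geodesic ball $B=B(o,R)$,
\begin{equation*}
\left\| \sup_{0<t<1} |u(\cdot,t)| \right\|_{L^2(B)} \lesssim_{R}\, \|f\|_{H^{1/2}(\Hb^2)}.
\end{equation*}
By the spectral theorem together with Helgason--Fourier inversion, the propagator $e^{it\Psi(\sqrt{-\Delta})}$ acts as a hyperbolic convolution $u(\cdot,t)=f\times k_t$ with a bi-$K$-invariant kernel $k_t$ whose spherical transform is $e^{it\psi(\lambda)}$; this is the structural replacement for the Euclidean formula and is where the geometry of $\Hb^2$ enters.

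I would then insert a Littlewood--Paley decomposition along the spectrum, $f=\sum_{j\ge 0} f_j$ with the spherical transform of $f_j$ supported in $|\lambda|\sim 2^{j}$ (the bounded low-frequency part being harmless). Each block is propagated by convolution with the frequency-localized radial kernel
\begin{equation*}
K^{j}_{t}(r)=\int_{0}^{\infty} e^{it\psi(\lambda)}\,\chi(2^{-j}\lambda)\,\varphi_{\lambda}(r)\,|c(\lambda)|^{-2}\,d\lambda,
\end{equation*}
where $r$ is the geodesic distance and $c$ is the Harish-Chandra $c$-function, with $|c(\lambda)|^{-2}\asymp|\lambda|$ for $|\lambda|\ge 1$. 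The analytic core is a dispersive bound for $K^{j}_{t}$. Here I would substitute the Harish-Chandra expansion $\varphi_{\lambda}(r)\sim c(\lambda)e^{(i\lambda-\rho)r}+c(-\lambda)e^{(-i\lambda-\rho)r}$, which converts $K^{j}_{t}$ into a sum of oscillatory integrals with phase $t\psi(\lambda)\pm\lambda r$, amplitude of size $|\lambda|^{1/2}$, and an external factor $e^{-\rho r}$. The hypotheses (\ref{phase_fn_properties}) are exactly the input for a stationary-phase/van der Corput analysis: the nondegeneracy $|\psi''(\lambda)|\asymp|\lambda|^{a-2}$ produces the decay at the (unique) stationary point $t\psi'(\lambda)=\mp r$, while the third-derivative bound $|\psi'''(\lambda)|\lesssim|\lambda|^{a-3}$ legitimizes the asymptotic expansion.

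The decisive advantage over $\R^{n}$ is the factor $e^{-\rho r}$: it provides exponential decay in distance that outweighs the exponential volume growth of $\Hb^2$ and yields a genuinely subcritical dispersive estimate for each dyadic block. Concretely, I expect a per-block maximal bound $\|\sup_{0<t<1}|u_j(\cdot,t)|\|_{L^2(B)}\lesssim 2^{-\epsilon j}\,\|f_j\|_{H^{1/2}}$ for some $\epsilon>0$, obtained most cleanly by linearizing the supremum (choosing a measurable time $t=t(x)$) and estimating the resulting oscillatory integral operator by a $TT^{*}$ argument, rather than through a naive pointwise kernel bound, since the supremum in $t$ breaks orthogonality. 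Granting such a gain, the endpoint follows by Cauchy--Schwarz, since $\sum_{j} 2^{-\epsilon j}\|f_j\|_{H^{1/2}}\lesssim\big(\sum_{j}\|f_j\|_{H^{1/2}}^{2}\big)^{1/2}\asymp\|f\|_{H^{1/2}}$.

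The main obstacle will be precisely the passage to the endpoint $\beta=1/2$ rather than $\beta>1/2$: the crude combination of a fixed-time dispersive estimate with a Sobolev embedding in $t$ only yields $\beta>1/2$ (this is essentially the Wang--Zhang bound), so one must extract an honest $2^{-\epsilon j}$ decay. This forces a stationary-phase analysis that is uniform in $t\in(0,1)$ and keeps careful track of the interaction between the decay $e^{-\rho r}$, the Plancherel growth $|c(\lambda)|^{-2}\asymp|\lambda|$, and the convexity $|\psi''|\asymp|\lambda|^{a-2}$, together with a uniform control, for $\lambda\sim 2^{j}$, of the error terms in the Harish-Chandra expansion and of the contributions near the stationary point and near the boundary of $B$. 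This is where I expect the bulk of the technical work to lie.
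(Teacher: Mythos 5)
Your overall frame---linearizing the supremum in $t$, passing to a $TT^*$/duality estimate on a fixed ball, decomposing dyadically in frequency, and running stationary-phase/van der Corput arguments powered by (\ref{phase_fn_properties})---is the same skeleton as the paper's proof of Theorem \ref{thm2}. However, there are two genuine gaps. First, your key analytic input, the Harish-Chandra-type expansion $\varphi_\lambda(r)\sim c(\lambda)e^{(i\lambda-\rho)r}+c(-\lambda)e^{(-i\lambda-\rho)r}$, is the wrong tool for this problem: in a local maximal estimate the relevant geodesic distance is $s=d(x,y)\to 0$ (the near-diagonal regime dominates the $TT^*$ kernel), and that expansion is only valid for $r$ bounded away from the origin---its coefficient series degenerates as $r\to 0$. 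In the regime $s\to 0$, $\lambda s\gtrsim 1$ the correct oscillation is $e^{\pm i\lambda s}$ with amplitude $\asymp(\lambda s)^{-1/2}$, not $|c(\lambda)|\asymp|\lambda|^{-1/2}$; the missing factor $s^{-1/2}$ is exactly what produces the borderline kernel singularity $d(x,y)^{-1}$ that must be tracked. This is why the paper replaces the Harish-Chandra expansion by the Stanton--Tomas local Bessel series (Lemma \ref{bessel_series_expansion}) combined with Bessel asymptotics (Lemma \ref{Bessel_fn_expansion}); without such a local expansion your oscillatory-integral analysis has no valid starting point where it matters most.

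Second, and more seriously, your mechanism for reaching the endpoint $\beta=1/2$ does not work. On a fixed ball $B(o,R)$ the factor $e^{-\rho r}$ satisfies $e^{-\rho r}\asymp_R 1$, so it cannot produce any gain; exponential volume growth versus exponential decay is irrelevant to a local estimate. Moreover, the per-block bound you ``expect,'' namely $\|\sup_{0<t<1}|u_j|\|_{L^2(B)}\lesssim 2^{-\epsilon j}\|f_j\|_{H^{1/2}}$ with $\epsilon>0$ uniform in $j$, would---by your own summation argument---yield pointwise convergence for some $\beta<1/2$, a strictly stronger statement than Theorem \ref{thm1} and precisely the open problem flagged in the paper's concluding remarks (where $\beta_0=1/3$ is only conjectured for the Schr\"odinger case). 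The paper attains the endpoint by a different mechanism, with no per-block gain: after Cauchy--Schwarz with the weight $|\lambda|$, everything reduces to the single kernel bound $\bigl|\int_0^\infty \varphi_\lambda(s)\,e^{i\tau\psi(\lambda)}\,{|{\bf c}(\lambda)|}^{-2}\lambda^{-1}\,d\lambda\bigr|\lesssim s^{-1}$, where the dyadic pieces are estimated by bounds that depend on $s$ through the threshold $2^j>s^{-1}$ (the sets $\Jc_1,\Jc_2,\Jc_3$, with $\Jc_3$ of bounded cardinality handled by van der Corput), and these bounds sum to $s^{-1}$; the conclusion then follows because $d(x,y)^{-1}$ is locally integrable on $\Hb^2$ by (\ref{density}). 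You correctly identified that crude fixed-time-plus-Sobolev-embedding arguments only give $\beta>1/2$, but the step that actually closes the gap to $\beta=1/2$ is absent from your outline.
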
 

To address the problem of pointwise convergence, the key is to consider the corresponding maximal function,
\begin{equation*} 
S^*_{\psi} f(x):= \displaystyle \sup_{0<t<1} \left|u(x,t)\right|\:,
\end{equation*}
where $u$ is the solution of (\ref{dispersive}) with initial data $f$. By standard arguments in the literature (for instance, see the proof of Theorem 5 of \cite{Sjolin}), Theorem \ref{thm1} then follows from the maximal estimate:
\begin{theorem}\label{thm2}
Let $f$ be an $L^2$-Schwartz class function on $\Hb^2$ and $B\left(x_0,1/2\right)$ denote the geodesic ball centred at $x_0 \in \Hb^2$ with radius $1/2$. Then for Dispersive equations (\ref{dispersive}), with phase $\psi$ satisfying (\ref{phase_fn_properties}), the maximal estimate
\begin{equation} \label{maximal_fn_inequality}
{\|S^*_{\psi}f\|}_{L^1\left(B\left(x_0,\frac{1}{2}\right)\right)} \lesssim {\|f\|}_{H^\beta(\Hb^2)}\:,
\end{equation}
holds for all $x_0 \in \Hb^2$ and all $\beta \ge 1/2$.
\end{theorem}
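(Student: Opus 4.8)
The plan is to linearize the maximal operator and pass to a dual $TT^*$ formulation, so that the whole estimate collapses to a single oscillatory kernel bound in the spectral variable $\lambda$. First I would choose, by measurable selection, a function $t:B(x_0,1/2)\to(0,1)$ realizing $\int_{B}\sup_{0<s<1}|u(x,s)|\,dx=\int_{B}|u(x,t(x))|\,dx$, and set $Uf(x):=u(x,t(x))$ (here and below $B:=B(x_0,1/2)$). By $L^1$--$L^\infty$ duality,
\[
\|S^*_{\psi}f\|_{L^1(B)}=\sup_{t(\cdot)}\|Uf\|_{L^1(B)}=\sup_{t(\cdot),\,\|h\|_{L^\infty(B)}\le1}\big|\langle f,U^*h\rangle\big|\le\|f\|_{H^{1/2}}\sup_{t(\cdot),\,\|h\|_\infty\le1}\|U^*h\|_{H^{-1/2}},
\]
so it suffices to prove $\|U^*h\|_{H^{-1/2}(\Hb^2)}\lesssim\|h\|_{L^\infty(B)}$ uniformly in the linearizing function $t(\cdot)$. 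Writing $U$ and $U^*$ through Helgason--Fourier inversion, $U^*h(y)=\int_{B}\overline{K_{t(x)}(x,y)}\,h(x)\,dx$ with $K_t(x,y)=\int_0^\infty\!\int_{K/M}e^{it\psi(\lambda)}e_{\lambda,b}(x)\overline{e_{\lambda,b}(y)}\,db\,|c(\lambda)|^{-2}\,d\lambda$, where $e_{\lambda,b}(x)=e^{(i\lambda+\rho)\langle x,b\rangle}$ and $c(\lambda)$ is the Harish--Chandra $c$-function.

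The second step is to expand the squared norm by the Helgason--Plancherel theorem, which yields
\[
\|U^*h\|_{H^{-1/2}}^2=\int_{B}\int_{B}h(x)\overline{h(x')}\,N(x,x')\,dx\,dx',
\]
\[
N(x,x')=\int_0^\infty (1+\lambda^2)^{-1/2}\,e^{-i(t(x)-t(x'))\psi(\lambda)}\,\beta_\lambda(x,x')\,|c(\lambda)|^{-2}\,d\lambda,
\]
where $\beta_\lambda(x,x')=\int_{K/M}e_{\lambda,b}(x)\overline{e_{\lambda,b}(x')}\,db$ is the two-point spherical kernel and $(1+\lambda^2)^{-1/2}$ is the multiplier of $(-\Delta)^{-1/2}$. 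Bounding $\int_B\int_B|h|\,|h|\,|N|\le\|h\|_\infty^2\int_B\int_B|N|$ reduces everything to the pointwise kernel estimate $|N(x,x')|\lesssim d(x,x')^{-1}$, \emph{uniformly} in $t(\cdot)$: indeed in two dimensions $\sup_{x'}\int_{B}d(x,x')^{-1}\,dx\lesssim\int_0^{1}r^{-1}\sinh r\,dr<\infty$, which closes the argument. Thus the dimension $2$ enters precisely through the local integrability of $d(x,x')^{-1}$.

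The heart of the proof is this uniform bound on $N$, and it is where conditions (\ref{phase_fn_properties}) become decisive. For the $b$-integral I would invoke the standard large-parameter (Bessel/Harish--Chandra) asymptotics of the spherical function on $\Hb^2$, giving $\beta_\lambda(x,x')\asymp(\lambda\,d(x,x'))^{-1/2}\big(c_+e^{i\lambda d(x,x')}+c_-e^{-i\lambda d(x,x')}\big)$ for $\lambda\,d(x,x')\gtrsim1$ and $\beta_\lambda=O(1)$ otherwise. Since the Plancherel density satisfies $|c(\lambda)|^{-2}\asymp|\lambda|$ at high frequency, the amplitude $(1+\lambda^2)^{-1/2}|c(\lambda)|^{-2}$ is exactly $\asymp1$ at the critical exponent $\beta=1/2$; this algebraic coincidence is what singles out $1/2$ as the natural threshold for the method. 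One is then left with $N(x,x')\approx d(x,x')^{-1/2}\int_0^\infty\lambda^{-1/2}e^{i(\pm\lambda d-(t(x)-t(x'))\psi(\lambda))}\,d\lambda$, which I would control by stationary phase in $\lambda$. When $t(x)=t(x')$ the phase is non-stationary with derivative $\pm d(x,x')$, and integration by parts gives the inner integral $\lesssim d(x,x')^{-1/2}$; when $t(x)\ne t(x')$ there is a stationary point $\lambda_*$ with $\psi'(\lambda_*)=d(x,x')/|t(x)-t(x')|$, and the stationary-phase contribution is $\asymp\lambda_*^{-1/2}\,|(t(x)-t(x'))\psi''(\lambda_*)|^{-1/2}$. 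Using $\psi''(\lambda_*)\asymp\lambda_*^{a-2}$ together with $|t(x)-t(x')|\asymp d(x,x')/\lambda_*^{a-1}$, this collapses to $\asymp d(x,x')^{-1/2}$ independently of $\lambda_*$; multiplying by the prefactor gives $|N(x,x')|\lesssim d(x,x')^{-1}$ in all cases.

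The main obstacle I anticipate is making this last estimate genuinely uniform in the linearization. As $|t(x)-t(x')|\to0$ the stationary frequency $\lambda_*$ runs to infinity, and only the exact cancellation available at $\beta=1/2$ (where the amplitude neither grows nor decays in $\lambda$) keeps the contribution bounded by $d(x,x')^{-1}$; for any $\beta<1/2$ the amplitude $\asymp\lambda_*^{\,1-2\beta}$ forces this contribution to diverge as $\lambda_*\to\infty$, which is precisely why the endpoint is sharp for this argument and why Wang--Zhang and Cowling could only reach $\beta>1/2$ and $\beta>a/2$ respectively. Controlling the stationary-phase error uniformly in the three parameters $\lambda_*$, $d(x,x')$ and $t(x)-t(x')$ will require the third-derivative hypothesis $|\psi'''(\lambda)|\lesssim|\lambda|^{a-3}$, while the transition zone $\lambda\,d(x,x')\sim1$ and the low-frequency range (where instead $|c(\lambda)|^{-2}\asymp\lambda^2$ and the amplitude is integrable) must be isolated but are harmless. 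Finally, the low-frequency Littlewood--Paley piece of $f$ produces a smooth, rapidly controlled kernel and is handled directly.
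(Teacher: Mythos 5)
Your proposal is correct and is essentially the paper's own argument: the same linearization-plus-duality ($TT^*$) reduction to a uniform kernel bound $|N(x,x')|\lesssim d(x,x')^{-1}$ (the paper reaches the identical kernel via a unimodular function $\Theta$ and Cauchy--Schwarz rather than an explicit $L^\infty$ dual function $h$), the same Bessel-type asymptotics of the spherical function, the same stationary-point cancellation $\lambda_*^{-1/2}\left|\tau\,\psi''(\lambda_*)\right|^{-1/2}\asymp d(x,x')^{-1/2}$ exploiting $\psi'(\lambda_*)\asymp\lambda_*^{a-1}$ and $\psi''(\lambda_*)\asymp\lambda_*^{a-2}$, and the same conclusion via local integrability of $d(x,x')^{-1}$ in dimension $2$. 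The only substantive difference is implementational: where you invoke stationary phase directly on an integral with non-compact amplitude, the paper makes that very computation rigorous by a dyadic decomposition in $\lambda$, two integrations by parts on the non-stationary blocks, and Van der Corput's lemma on the $O(1)$ many transition blocks --- precisely the uniform error control (using the third-derivative hypothesis) that you correctly flagged as the remaining obstacle.
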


\begin{remark} \label{remark1}
We now briefly summarize the novelties of this article:
\begin{itemize}
\item In \cite{Cowling}, Cowling's arguments only involved abstract Hilbert space theory. In \cite{WZ}, the authors obtained the bound $\beta>1/2$, by means of a local smoothing effect, building up on Doi's ideas \cite{Doi} on the interaction between smoothing effects of the Schr\"odinger evolution group and the behaviour of the geodesic flow.

\medskip

\item The main idea in our article however is completely different: utilizing the underlying Riemannian Symmetric space structure of $\Hb^2 \cong SU(1,1)/SO(2)$, our approach is based on the notion of Helgason-Fourier transform, arising from the representation theory of semi-simple Lie groups. Then crucially using a local Bessel series expansion of the spherical function $\varphi_\lambda$ (given by Lemma \ref{bessel_series_expansion}), we estimate the joint oscillation afforded by $\varphi_\lambda$ and the Fourier multiplier, to improve the bound down to $\beta=1/2$. 
\end{itemize}
\end{remark}

This article is organised as follows. In Section $2$, we recall the necessary preliminaries and fix our notations. In Section $3$, we prove Theorem \ref{thm2}. Finally, in Section $4$, we conclude by sketching some possible directions for further research.
 
\section{Preliminaries}
In this section, we recall some preliminaries and fix our notations. Throughout, the symbols `c' and `C' will denote positive constants whose values may change on each occurrence. The enumerated constants $C_1,C_2, \dots$ will however be fixed throughout. $\N$ will denote the set of positive integers. For $x \in \R$, $\lceil x\rceil$ will denote the smallest integer $m \in \Z$ such that $x \le m$. For non-negative functions $f_1,\:f_2$ we write,  $f_1 \lesssim f_2$ if there exists a constant $C \ge 1 $, so that
\begin{equation*}
f_1 \le C f_2 \:\:,
\end{equation*}
and $f_1 \asymp f_2$ if 
\begin{equation*}
\frac{1}{C} f_1 \le f_2 \le C f_1\:.
\end{equation*}
For a real-valued function $f_3$, we will also write $f_3=\mathcal{O}(f_1)$ if 
\begin{equation*}
|f_3| \lesssim f_1\:.
\end{equation*}

We now recall the required preliminaries about rank one Riemannian Symmetric spaces of non-compact type and Fourier analysis thereon. The relevant information can be found in \cite{GV, HelGGA, HelSymm}.

Let $G$ be a connected, non-compact, semi-simple Lie group with finite center and $K$ be a maximal compact subgroup of $G$. We consider the Riemannian Symmetric space of non-compact type $\X=G/K$. Let $n$ be the dimension of $\X$. Thus elements $x \in \X$ are of the form $x=gK$, for $g \in G$. We let $o=eK$ denote the origin of $\X$ and $dx$ the left $G$-invariant Riemannian volume measure on $\X$. The group $G$ acts on $\X$ by isomteries, that is, the geodesic distance function $d(\cdot,\cdot)$ on $\X$, is left $G$-invariant:
\begin{equation*}
d\left(g \cdot x, g \cdot y\right)=d(x,y)\:, x,y \in \X\:,\: g \in G\:.
\end{equation*}

Expressing the Iwasawa decomposition in the form $G=NAK$, any element $g \in G$ can be uniquely written as, $g=n(g)\exp\left(A(g)\right)k(g)$, where $A(g) \in \mathfrak{a}$, the Lie algebra of $A$. Let $M$ and $M'$ be the centralizer and normalizer of $A$ in $K$ respectively. Let $\mathbb{B}=K/M$ and $db$ denote the natural probability measure on it. The finite group $W=M/M'$ is called the Weyl group. The function $A$ defined above actually defines a function on $\X \times \mathbb{B}$ via,
\begin{equation*}
A\left(gK,kM\right)=A(k^{-1}g)\:.
\end{equation*}

The dimension of $\mathfrak{a}$ is called the rank of $G$ and also of $\X$. Hence, when rank = $1$, $\mathfrak{a}^*$ and $\mathfrak{a}^*_\C$, the real  dual of $\mathfrak{a}$ and its complexification respectively, can be identified with $\R$ and $\C$. From now onwards, we focus only in the rank one case. We fix some order on the non-zero restricted roots. There are at most two roots which are positive relative to this order. These roots will be denoted by $\alpha$ and $2\alpha$. Letting $p$ and $q$ denote the multiplicities of these roots respectively, we set $\rho=(p+2q)/2$.

One also has the Cartan decomposition $G=K\overline{A^+}K$. Let $g^+$ be the $\overline{\mathfrak{a}^+}$-component of $g \in G$, in the decomposition $G= K\left(\exp \overline{\mathfrak{a}^+}\right) K$ and let $\sigma(g)=|g^+|$. For any $f \in L^1(\X)$, its integral with respect to the volume measure can be written in the polar coordinate as,
\begin{equation*}
\int_{\X} f(x)\:dx \:=\: \int_K \int_0^\infty f(ka_s \cdot o)\:D(s)\:ds\:dk\:,
\end{equation*}
where $D(\cdot)$ is the density function, $ds$ is the Lebesgue measure restricted to $[0,\infty)$ and $dk$ is the Haar measure on $K$.

The spherical function on $\X$ has the form,
\begin{equation*}
\varphi_\lambda(x) = \int_{\mathbb{B}} \:e^{(i\lambda+\rho)A(x,b)}\:db\:,\:\text{ for } \lambda \in \C\:,
\end{equation*}
and satisfies the functional equation:
\begin{equation} \label{functional_eqn}
\varphi_\lambda(g^{-1}_yg_x \cdot o)= \int_{\mathbb{B}} e^{(i\lambda+\rho)A(x,b)}\:e^{(-i\lambda+\rho)A(y,b)}\:db\:,
\end{equation}
where $g_x,g_y \in G$ such that $x=g_xK$ and $y=g_yK$. Moreover,
\begin{equation*}
\varphi_\lambda = \varphi_{-\lambda}\:,\: \lambda \in \C\:,
\end{equation*}
and thus in particular,
\begin{equation} \label{-i_rho}
\varphi_{-i\rho}(x)=\varphi_{i\rho}(x) \equiv 1\:,\: x \in \X\:.
\end{equation}
We also have
\begin{equation} \label{phi_lambda_bound}
\left|\varphi_\lambda(x)\right| \le 1\:,\: x \in \X\:,\: \lambda \in \R\:.
\end{equation}
The spherical function is a radial function, that is,
\begin{equation*}
\varphi_\lambda(x)=\varphi_\lambda(s)\:,
\end{equation*}
where $s=d(o,x)$, the geodesic distance of $x$ from the origin.

Although, in the generality of rank one Riemannian Symmetric spaces of non-compact type, we do not have an explicit expression of $\varphi_\lambda$, for points near the origin however,   $\varphi_\lambda$ does admit a Bessel series expansion. But before stating that result, let us define the following normalizing constant in terms of the Gamma functions,
\begin{equation*}
c_0 = \pi^{\frac{1}{2}}\: 2^{\left(\frac{q}{2}-2\right)}\: \: \frac{\Gamma\left(\frac{n-1}{2}\right)}{\Gamma\left(\frac{n}{2}\right)}\:,
\end{equation*}
and the following functions on $\C$, for all $\mu \ge 0$,
\begin{equation*}
\J_\mu(z)= 2^{\mu-1} \: \Gamma\left(\frac{1}{2} \right)\: \Gamma\left(\mu + \frac{1}{2} \right) \frac{J_\mu(z)}{z^\mu},
\end{equation*}
where $J_\mu$ are the Bessel functions \cite[p. 154]{SW}.
\begin{lemma}\cite[Theorem 2.1]{ST} \label{bessel_series_expansion}
Let $\X$ be a rank one Riemannian Symmetric space of non-compact type and for $x \in \X$, let $s=d(o,x)$. Then there exist $R_0>1, R_1>1$, such that for any $0 \le s \le R_0$, and any integer $M \ge 0$, and all $\lambda \in \R$, we have
\begin{equation*}
\varphi_\lambda(s)= c_0 {\left(\frac{s^{n-1}}{D(s)}\right)}^{1/2} \displaystyle\sum_{l=0}^M a_l(s)\J_{\frac{n-2}{2}+l}(\lambda s) s^{2l} + E_{M+1}(\lambda,s)\:,
\end{equation*}
where
\begin{equation*}
a_0 \equiv 1\:,\: |a_l(s)| \le C {(4R_1)}^{-l}\:,
\end{equation*}
and the error term has the following behaviour
\begin{equation*}
	\left|E_{M+1}(\lambda,s) \right| \le C_M \begin{cases}
	 s^{2(M+1)}  & \text{ if  }\: |\lambda s| \le 1 \\
	s^{2(M+1)} {|\lambda s|}^{-\left(\frac{n-1}{2} + M +1\right)} &\text{ if  }\: |\lambda s| > 1 \:.
	\end{cases}
\end{equation*}
%Moreover, for every $0 \le s <2$, the series
%\begin{equation*}
%\varphi_\lambda(s)= c_0 {\left(\frac{s^{n-1}}{A(s)}\right)}^{1/2} \displaystyle\sum_{l=0}^\infty a_l(s)\J_{\frac{n-2}{2}+l}(\lambda s) s^{2l}\:,
%\end{equation*}
%is absolutely convergent.
\end{lemma}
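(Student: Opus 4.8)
The plan is to reduce the lemma to a single ordinary differential equation and to treat $\varphi_\lambda$ as a perturbation of a Bessel function. Recall that $\varphi_\lambda$ is, up to normalization, the unique solution regular at the origin of the radial part of $\Delta\varphi_\lambda=-(\lambda^2+\rho^2)\varphi_\lambda$; writing $s=d(o,x)$ and using polar coordinates this is
\begin{equation*}
\varphi_\lambda''(s)+\frac{D'(s)}{D(s)}\varphi_\lambda'(s)+(\lambda^2+\rho^2)\varphi_\lambda(s)=0,\qquad \varphi_\lambda(0)=1,\ \varphi_\lambda'(0)=0,
\end{equation*}
on $(0,R_0)$. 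Since $\X$ is asymptotically flat near $o$, the density factors as $D(s)=s^{n-1}h(s)$ with $h$ smooth, even and strictly positive. First I would perform the Liouville-type substitution $\varphi_\lambda(s)=A(s)F(s)$ with $A(s)=c_0\,(s^{n-1}/D(s))^{1/2}$, a smooth, even, nonvanishing function near $0$. This particular $A$ is chosen so that it simultaneously normalizes the first order coefficient to $(n-1)/s$ and absorbs the constant $\rho^2$ together with all curvature terms into a single smooth even potential: a direct computation gives
\begin{equation*}
F''(s)+\frac{n-1}{s}F'(s)+\lambda^2 F(s)=W(s)F(s),
\end{equation*}
with $W$ smooth and even near the origin. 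Writing $\mu=(n-2)/2$, the operator on the left is exactly the Bessel operator whose regular normalized solution is $\J_\mu(\lambda s)$, so $F$ solves a perturbed Bessel equation.

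Next I would construct $F$ as a Bessel series $F(s)=\sum_{l\ge0}a_l(s)\,s^{2l}\,\J_{\mu+l}(\lambda s)$ with $a_0\equiv1$. Using the contiguity relation $\frac{d}{dz}\J_m(z)=-\frac{z}{2m+1}\J_{m+1}(z)$ and the model equation satisfied by each $\J_{\mu+l}$, one computes the action of the Bessel operator on the term $a_l(s)s^{2l}\J_{\mu+l}(\lambda s)$ and matches the identity order by order, obtaining a triangular recursion that determines each $a_{l+1}$ from $a_0,\dots,a_l$ and the Taylor coefficients of $W$. The delicate point to verify is that the $\lambda^2$ factors produced by the contiguity relation cancel against the intrinsic $\lambda$-dependence already carried by the leading Bessel function $\J_\mu(\lambda s)$; one checks this at the first step by matching the coefficient of $s^2$, where the $\lambda^2$ contributions drop out. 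Consequently the $a_l$ are $\lambda$-independent, smooth and even, with $a_0\equiv1$. Since the recursion solves for $a_{l+1}$ by dividing by the indicial gap, a factor comparable to $l(l+\mu)$, each step gains a fixed geometric factor; combined with the smoothness of $W$ on a fixed neighbourhood of $o$, this produces the bound $|a_l(s)|\le C(4R_1)^{-l}$ on $[0,R_0]$ and hence absolute convergence of the series.

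It then remains to estimate the remainder $E_{M+1}=\varphi_\lambda-A\sum_{l=0}^{M}a_l\,s^{2l}\,\J_{\mu+l}(\lambda\cdot)$ in the two frequency regimes. For $|\lambda s|\le1$ each $\J_{\mu+l}$ is entire, even and $\mathcal O(1)$ on bounded sets, so the first omitted term and the geometric tail are $\mathcal O(s^{2(M+1)})$, matching the first case. For $|\lambda s|>1$ I would use the large-argument bound $|\J_m(z)|\lesssim|z|^{-m-1/2}$ (equivalent to $|J_m(z)|\lesssim|z|^{-1/2}$): the first omitted term $a_{M+1}s^{2(M+1)}\J_{\mu+M+1}(\lambda s)$ then decays like $s^{2(M+1)}|\lambda s|^{-(\mu+M+3/2)}=s^{2(M+1)}|\lambda s|^{-((n-1)/2+M+1)}$, which is exactly the claimed rate, the extra $M+1$ powers of decay arising from the raised Bessel index. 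To convert this heuristic into a proof one rewrites the perturbed Bessel equation as the equivalent Volterra integral equation and bounds $E_{M+1}$ by integrating the explicitly controlled truncation source against the Bessel Green kernel built from $\J_\mu(\lambda\cdot)$ and the second, singular solution; this also certifies that the constructed series is genuinely the regular solution, hence equals $\varphi_\lambda$.

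I expect the remainder estimate to be the principal obstacle. One has to track the truncation order (powers of $s$) and the oscillatory decay of the Bessel functions at the same time, and to bound the Green kernel of the singular Bessel operator uniformly as $\lambda$ ranges over $\R$, in particular across the transition region $|\lambda s|\approx1$ and in the competition between the growing index $\mu+M+1$ and the argument $\lambda s$. The secondary technical point is to secure the genuine geometric decay of the coefficients $a_l$ through the indicial-gap factor, which is what upgrades the formal recursion to a convergent expansion.
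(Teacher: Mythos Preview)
The paper does not prove this lemma at all; it is simply quoted from Stanton--Tomas \cite{ST}, Theorem~2.1. Your sketch is a faithful outline of the original Stanton--Tomas argument (Liouville substitution to a perturbed Bessel equation, recursive construction of the $a_l$ with geometric decay from the indicial gap, and remainder control via the Volterra/Green-kernel integral equation), so the approach is correct and matches the referenced proof.
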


We will also require the following asymptotic expansion of Bessel functions:
\begin{lemma} \label{Szego_result} \cite[Eqn (1.71.8), p. 16]{Szego}
For $\mu \ge 0$, we have for any $p \in \N$,
\begin{eqnarray*}
J_\mu(t)&=& \left(\frac{2}{\pi t}\right)^\frac{1}{2} \cos\left(t-\frac{\mu \pi}{2}-\frac{\pi}{4}\right)\left\{\sum_{l=0}^{p-1}\tilde{a}_l t^{-2l}\:+\: \mathcal{O}\left(t^{-2p}\right)\right\} \\
&& + \left(\frac{2}{\pi t}\right)^\frac{1}{2} \sin\left(t-\frac{\mu \pi}{2}-\frac{\pi}{4}\right)\left\{\sum_{l=0}^{p-1}b_l t^{-2l-1}\:+\: \mathcal{O}\left(t^{-2p-1}\right)\right\}\:,
\end{eqnarray*}
for $t \to +\infty$, where $\tilde{a}_l$ and $b_l$ are constants depending only on $l$ and also $\tilde{a}_0 \equiv 1$. 
\end{lemma}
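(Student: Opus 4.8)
The plan is to deduce the real two-term (cosine/sine) expansion from the classical one-sided asymptotics of the Hankel function $H^{(1)}_\mu$ and then take real parts. Fix $\mu\ge 0$ and $t>0$. Since $J_\mu=\tfrac12\big(H^{(1)}_\mu+H^{(2)}_\mu\big)$ and, for real argument and real order, $H^{(2)}_\mu(t)=\overline{H^{(1)}_\mu(t)}$, we have $J_\mu(t)=\mathrm{Re}\,H^{(1)}_\mu(t)$. Hence it suffices to establish an expansion of the form
\begin{equation*}
H^{(1)}_\mu(t)=\Big(\frac{2}{\pi t}\Big)^{1/2} e^{i\omega}\Big(\sum_{k=0}^{2p-1} i^k\,a_k(\mu)\,t^{-k}+R_{2p}(t)\Big),\qquad \omega:=t-\frac{\mu\pi}{2}-\frac{\pi}{4},
\end{equation*}
with real coefficients $a_k(\mu)$, $a_0\equiv 1$, and a controlled remainder $R_{2p}$.

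To produce this, I would start from the integral representation, valid for $\mathrm{Re}\,\mu>-1/2$ and $t>0$,
\begin{equation*}
H^{(1)}_\mu(t)=\Big(\frac{2}{\pi t}\Big)^{1/2}\frac{e^{i\omega}}{\Gamma\big(\mu+\tfrac12\big)}\int_0^\infty e^{-\tau}\,\tau^{\mu-\frac12}\Big(1-\frac{\tau}{2it}\Big)^{\mu-\frac12}\,d\tau,
\end{equation*}
obtained from the Poisson--Sommerfeld contour representation by deforming onto the steepest-descent ray. Expanding the factor $(1-\tau/(2it))^{\mu-1/2}$ by the finite binomial theorem with integral remainder and integrating term by term against $e^{-\tau}\tau^{\mu-1/2}$ (Watson's lemma) produces the displayed series: the coefficient of $(2it)^{-k}$ is $\binom{\mu-1/2}{k}(-1)^k\,\Gamma(\mu+\tfrac12+k)/\Gamma(\mu+\tfrac12)$, which a short computation rewrites as $i^k a_k(\mu)t^{-k}$ with
\begin{equation*}
a_k(\mu)=\frac{1}{k!\,8^k}\prod_{j=1}^{k}\big(4\mu^2-(2j-1)^2\big),\qquad a_0=1.
\end{equation*}
In particular each $a_k(\mu)$ is a real polynomial in $\mu^2$, and the remainder $R_{2p}(t)$ is $\mathcal{O}(t^{-2p})$ uniformly for $t\ge 1$.

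Finally I would separate the expansion by the parity of $k$. Since $i^{2l}=(-1)^l$ is real and $i^{2l+1}=i(-1)^l$ is imaginary, the series splits as $P(t)+iQ(t)$ with $P(t)=\sum_l (-1)^l a_{2l}(\mu)t^{-2l}$ (even powers only) and $Q(t)=\sum_l (-1)^l a_{2l+1}(\mu)t^{-2l-1}$ (odd powers only). Writing $e^{i\omega}=\cos\omega+i\sin\omega$ and taking the real part yields $J_\mu(t)=(2/\pi t)^{1/2}\big(\cos\omega\,P(t)-\sin\omega\,Q(t)\big)$, which is exactly the asserted formula with $\tilde a_l=(-1)^l a_{2l}(\mu)$ (so $\tilde a_0=a_0=1$) and $b_l=-(-1)^l a_{2l+1}(\mu)$. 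The asymmetry in the two remainders is then automatic: truncating $P$ and $Q$ each at index $l=p-1$ omits only even powers $\ge t^{-2p}$ from $P$ and only odd powers $\ge t^{-2p-1}$ from $Q$, so the real part of $R_{2p}$ contributes $\mathcal{O}(t^{-2p})$ to the cosine bracket while its imaginary part contributes $\mathcal{O}(t^{-2p-1})$ to the sine bracket. The main obstacle lies in the second paragraph: rigorously justifying the contour deformation behind the integral representation and, above all, bounding the Watson's-lemma remainder so that $R_{2p}(t)=\mathcal{O}(t^{-2p})$ uniformly in $t$ (the term-by-term integration being only formal until the tail of the binomial remainder is estimated against the weight $e^{-\tau}\tau^{\mu-1/2}$); the subsequent algebraic rearrangement and parity separation are routine.
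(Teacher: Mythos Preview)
The paper does not prove this lemma at all: it is stated with a direct citation to Szeg\H{o}'s book and used as a black box. So there is no ``paper's own proof'' to compare against --- your write-up is strictly more than what the paper offers.

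Your route via $J_\mu=\mathrm{Re}\,H^{(1)}_\mu$, the Schl\"afli--Sommerfeld integral for $H^{(1)}_\mu$, and Watson's lemma is the standard derivation (and essentially what underlies Szeg\H{o}'s formula), and the algebra separating even and odd $k$ is correct. One point deserves care: from $R_{2p}(t)=\mathcal{O}(t^{-2p})$ alone you cannot conclude that its \emph{imaginary} part is $\mathcal{O}(t^{-2p-1})$; to get the sharper error in the sine bracket you must push the Hankel expansion one step further, to $k=2p$, so that the newly isolated term $i^{2p}a_{2p}(\mu)t^{-2p}$ is real (hence feeds the cosine remainder) and the leftover $R_{2p+1}=\mathcal{O}(t^{-2p-1})$ governs the sine remainder. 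Your final paragraph gestures at this but phrases it as if it followed from the $2p$-term truncation, which it does not quite. Also note a harmless slip in the lemma's wording that you implicitly corrected: the constants $\tilde a_l,\,b_l$ depend on $\mu$ as well as on $l$.
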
 
As a consequence, we have:
\begin{lemma} \label{Bessel_fn_expansion}
There exist complex numbers $z_1,\dots,z_6$, such that for $t \to +\infty$,
\begin{eqnarray*}
J_0(t) &=& e^{it}\left\{z_1\:t^{-\frac{1}{2}}+z_2\:t^{-\frac{3}{2}}\right\} +\: e^{-it}\left\{z_3\:t^{-\frac{1}{2}}+z_4\:t^{-\frac{3}{2}}\right\} + \mathcal{O}\left(t^{-\frac{5}{2}}\right)\:, \\
\frac{J_1(t)}{t}&=& z_5\: e^{it}\:t^{-\frac{3}{2}}\: +\: z_6\: e^{-it}\:t^{-\frac{3}{2}} + \mathcal{O}\left(t^{-\frac{5}{2}}\right)\:.
\end{eqnarray*}
\end{lemma}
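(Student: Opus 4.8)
The plan is to read both expansions off Lemma \ref{Szego_result} by specializing the order $\mu$ and converting the trigonometric factors into complex exponentials via Euler's formula, while discarding everything of size $\mathcal{O}(t^{-5/2})$. First I would treat $J_0$. Applying Lemma \ref{Szego_result} with $\mu=0$ and $p=1$ gives
\begin{equation*}
J_0(t)=\left(\frac{2}{\pi t}\right)^{\frac12}\cos\left(t-\frac{\pi}{4}\right)\left\{1+\mathcal{O}(t^{-2})\right\}+\left(\frac{2}{\pi t}\right)^{\frac12}\sin\left(t-\frac{\pi}{4}\right)\left\{b_0\,t^{-1}+\mathcal{O}(t^{-3})\right\}.
\end{equation*}
The prefactor carries a factor $t^{-1/2}$, so the retained cosine term is of order $t^{-1/2}$, the sine term (carrying $b_0 t^{-1}$) is of order $t^{-3/2}$, and both bracketed error terms become $\mathcal{O}(t^{-5/2})$ after multiplication by the prefactor. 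Hence only the two leading contributions survive up to the error level.

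Next I would substitute $\cos(t-\pi/4)=\tfrac12\left(e^{-i\pi/4}e^{it}+e^{i\pi/4}e^{-it}\right)$ and $\sin(t-\pi/4)=\tfrac{1}{2i}\left(e^{-i\pi/4}e^{it}-e^{i\pi/4}e^{-it}\right)$ and collect the coefficients of $e^{it}t^{-1/2}$, $e^{-it}t^{-1/2}$, $e^{it}t^{-3/2}$ and $e^{-it}t^{-3/2}$. These four constants (each a fixed multiple of $(2/\pi)^{1/2}$ times a phase $e^{\pm i\pi/4}$, with $b_0$ entering the $t^{-3/2}$ coefficients) furnish the claimed $z_1,z_2,z_3,z_4$. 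For the second identity I would repeat the argument with $\mu=1$, for which the phase becomes $t-\tfrac{3\pi}{4}$, and then divide by $t$:
\begin{equation*}
\frac{J_1(t)}{t}=\left(\frac{2}{\pi t}\right)^{\frac12}t^{-1}\cos\left(t-\frac{3\pi}{4}\right)\left\{1+\mathcal{O}(t^{-2})\right\}+\left(\frac{2}{\pi t}\right)^{\frac12}t^{-1}\sin\left(t-\frac{3\pi}{4}\right)\left\{b_0 t^{-1}+\mathcal{O}(t^{-3})\right\}.
\end{equation*}
The extra factor $t^{-1}$ now demotes the leading cosine term to order $t^{-3/2}$ and pushes the \emph{entire} sine contribution, together with the bracketed errors, into $\mathcal{O}(t^{-5/2})$, so only the cosine survives. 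Writing $\cos(t-3\pi/4)=\tfrac12\left(e^{-3i\pi/4}e^{it}+e^{3i\pi/4}e^{-it}\right)$ then yields the coefficients $z_5,z_6$ of $e^{it}t^{-3/2}$ and $e^{-it}t^{-3/2}$.

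There is no genuine obstacle here: the content is entirely the bookkeeping of powers of $t$ and of the phase angles, and the single point requiring care is verifying that every term not explicitly retained is of size $\mathcal{O}(t^{-5/2})$ or smaller. In particular, for $J_1(t)/t$ it is precisely the division by $t$ that drops the sine term below the truncation level, which explains why no $t^{-1/2}$ term appears in the second expansion.
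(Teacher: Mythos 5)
Your proposal is correct and follows essentially the same route as the paper, which simply cites Lemma \ref{Szego_result} and converts the trigonometric factors to exponentials; the only (immaterial) difference is that you take $p=1$ for $J_0$ where the paper takes $p=2$, and your choice indeed suffices since the discarded cosine-bracket error $t^{-1/2}\cdot\mathcal{O}(t^{-2})$ is exactly $\mathcal{O}(t^{-5/2})$. Your bookkeeping for $J_1(t)/t$ (the division by $t$ pushing the entire sine contribution below the truncation level) matches the paper's choice $p=1$ precisely.
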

\begin{proof}
The result follows at once from Lemma \ref{Szego_result}, by considering $p=2$ for $J_0$, and $p=1$ for $J_1$. 
\end{proof}
Let $\mathfrak{g}$ be the Lie algebra of $G$ and $\mathcal{U}(\mathfrak{g})$ denote its universal enveloping algebra. The elements of $\mathcal{U}(\mathfrak{g})$ act on $C^\infty(G)$ as differential operators, on both sides. Following the notation of Harish-Chandra, we shall write $f\left(D;g;E\right)$ for the action of $(D,E) \in \mathcal{U}(\mathfrak{g}) \times \mathcal{U}(\mathfrak{g})$ on $f \in C^\infty(G)$ at $g \in G$. More precisely,
\begin{eqnarray*}
f\left(D;g;E\right) &=& \left(\frac{\partial}{\partial s_1} \cdots \frac{\partial}{\partial s_d}\frac{\partial}{\partial t_1}\cdots \frac{\partial}{\partial t_e}\right)\big{|}_{s_1=\cdots=s_d=t_1=\cdots=t_e=0} \\
&& \times f\left(\left(\exp s_1X_1\right)\cdots\left(\exp s_dX_d\right)g\left(\exp t_1Y_1\right)\cdots \left(\exp t_eY_e\right)\right)\:,
\end{eqnarray*}
if $D=X_1 \cdots X_d$ and $E=Y_1 \cdots Y_e$, for $X_1, \cdots, X_d, Y_1, \cdots, Y_e \in \mathfrak{g}$.

We now come to the definition of $L^2$-Schwartz class functions. $\Sc^2(G)$, called the $L^2$-Schwartz class on $G$, is the collection of all $f \in C^\infty(G)$ such that for each $D,E \in \mathcal{U}(\mathfrak{g})$ and all non-negative integers $m$,
\begin{equation*}
\left|f\left(D;g;E\right)\right| \lesssim_m \varphi_0(g) \left(1+\sigma(g)\right)^{-m}\:,\: g \in G\:.
\end{equation*}
$\Sc^2(\X)$ is the sub-collection of functions in $\Sc^2(G)$ that are also right $K$-invariant. In particular, for all $f \in \Sc^2(\X)$ and all non-negative integers $m$, $\Delta^m f \in L^2(\X)$.
 
For $f \in \Sc^2(\X)$, the Helgason-Fourier transform of $f$ is defined as,
\begin{equation*}
\tilde{f}(\lambda,b)=\int_{\X} f(x)\:e^{(-i\lambda+\rho)A(x,b)}\:dx\:,\: (\lambda,b) \in \R \times \mathbb{B}\:. 
\end{equation*}
The function $x \mapsto e^{(-i\lambda+\rho)A(x,b)}$ is an eigenfunction of $\Delta$ with eigenvalue $-\left(\lambda^2 + \rho^2\right)$, and hence for all $f \in \Sc^2(\X)$,
\begin{equation} \label{Fourier_Delta}
\widetilde{\left(\Delta f\right)}(\lambda,b)=   -\left(\lambda^2 + \rho^2\right) \tilde{f}(\lambda,b)\:.
\end{equation}

The relevant inversion formula is given by,
\begin{equation*}
f(x) = \frac{1}{|W|}\int_{\mathbb{B}} \int_{\R}
\tilde{f}(\lambda,b)\:e^{(i\lambda+\rho)A(x,b)}\:{|{\bf c}(\lambda)|}^{-2}\: d\lambda\:db\:,\: x \in \X\:,
\end{equation*}
where ${\bf c}(\cdot)$ denotes the Harish-Chandra's ${\bf c}$-function. The Plancherel identity is given by,
\begin{equation*}
\left\|f\right\|_{L^2(\X)}\:=\: c\:\left(\int_{\mathbb{B}} \int_{\R} \left|\tilde{f}(\lambda,b)\right|^2\:{|{\bf c}(\lambda)|}^{-2}\: d\lambda\:db \right)^{\frac{1}{2}}\:,
\end{equation*}
for some $c>0$. The weight in the Plancherel measure satisfies \cite[Lemma 4.2]{ST}:
\begin{equation} \label{planceherel_measure}
{|{\bf c}(\lambda)|}^{-2} \lesssim \left(1\:+\:|\lambda|\right)^{n-1}\:,\: \lambda \in \R\:.
\end{equation}

Using the above notion of the Helgason-Fourier transform, for $f \in \Sc^2(\X)$, the solution of the equation
\begin{equation*} 
\begin{cases}
	 i\frac{\partial u}{\partial t} +\Psi(\sqrt{-\Delta} )u=0\:,\:  (x,t) \in \X \times \R\:, \\
	u(0,\cdot)=f\:,\: \text{ on } \X \:,
	\end{cases}
\end{equation*}
is given by the formula,
\begin{equation} \label{dispersive_soln}
u(x,t)= \frac{1}{|W|}\int_{\mathbb{B}} \int_{\R}
\tilde{f}(\lambda,b)\:e^{(i\lambda+\rho)A(x,b)}\:e^{it\psi(\lambda)}\:{|{\bf c}(\lambda)|}^{-2}\: d\lambda\:db\:.
\end{equation}
In view of the Plancherel identity, we can also express the norms for the Sobolev spaces defined in (\ref{Sobolev_space}) by,
\begin{equation*}
\left\|f\right\|_{H^\beta(\X)}\: \asymp\: \left(\int_{\mathbb{B}} \int_{\R} \left(\lambda^2+\rho^2\right)^\beta\:\left|\tilde{f}(\lambda,b)\right|^2\:{|{\bf c}(\lambda)|}^{-2}\: d\lambda\:db \right)^{\frac{1}{2}}\:.
\end{equation*}

In this article, we are primarily interested in $\Hb^2 \cong SU(1,1)/SO(2)$, where
\begin{eqnarray*}
 SU(1,1)\:&=&\:\left\{\begin{pmatrix}
a & b \\
\bar{b} & \bar{a}
\end{pmatrix} \mid a,b \in \C,\: |a|^2 - |b|^2=1 \right\}\:,\\
SO(2)\:&=&\:\left\{\begin{pmatrix}
e^{i\theta} & 0 \\
0 & e^{-i\theta}
\end{pmatrix} \mid \theta \in [0,2\pi)\right\}\:.
\end{eqnarray*}
In this case, the density function is given by,
\begin{equation*}
D(s)=\sinh(2s)\:,
\end{equation*}
and hence for $s<1$, satisfies the local growth asymptotics:
\begin{equation} \label{density}
D(s) \asymp s\:.
\end{equation}
Moreover, the weight of the Plancherel measure is explcitly given by, 
\begin{equation*}
{|{\bf c}(\lambda)|}^{-2}=\lambda \tanh \left(\frac{\pi \lambda}{2}\right).
\end{equation*}

\section{Proof of Theorem \ref{thm2}}
We start off the proof in the general setting of rank one Riemannian Symmetric spaces of non-compact type and then gradually specialize to $\Hb^2$.

\medskip

Let $G$ be a connected, non-compact, semi-simple Lie group with finite center and real rank one. Let $K$ be a maximal compact subgroup of $G$ and $\X =G/K$ be the corresponding Symmetric space. Let $f \in \Sc^2(\X)$, the class of $L^2$-Schwartz class functions on $\X$ and $B\left(x_0,1/2\right)$ denote the geodesic ball on $\X$, with centre $x_0 \in \X$ and radius $1/2$, with respect to the left $G$-invariant metric $d$ on $\X$.

\medskip

We note that to prove the local maximal estimate (\ref{maximal_fn_inequality}), it suffices to prove the following estimate at the end-point:
\begin{equation} \label{pf_eq1}
\left\| T_{\psi}f\right\|_{L^1\left(B\left(x_0,\frac{1}{2}\right)\right)} \lesssim {\|f\|}_{H^{\frac{1}{2}}(\X)}\:, \text{ for all } x_0 \in \X\:,
\end{equation}
where  $T_{\psi}f$ is the linearization given by the formula (\ref{dispersive_soln}),
\begin{equation*}
T_\psi f(x):=u(x,t(x))= \frac{1}{|W|}\int_{\mathbb{B}} \int_{\R}
\tilde{f}(\lambda,b)\:e^{(i\lambda+\rho)A(x,b)}\:e^{it(x)\psi(\lambda)}\:{|{\bf c}(\lambda)|}^{-2}\: d\lambda\:db\:,
\end{equation*}
for any measurable function $t(\cdot):\X \to (0,1).$

\medskip

For $f \in \Sc^2(\X)$, by (\ref{-i_rho}), (\ref{Fourier_Delta}), in view of the growth asymptotic of ${|{\bf c}(\lambda)|}^{-2}$ given in (\ref{planceherel_measure}), the Cauchy-Schwarz inequality and the Plancherel theorem yields for sufficiently large $m_0 \in \N$ and for all $x \in \X$,
\begin{eqnarray*}
\left|T_\psi f(x)\right| &\lesssim & {\|\Delta^{m_0}f\|}_{L^2(\X)}\:\left(\int_{\mathbb{B}} e^{2\rho A(x,b)}\:db \int_{\R} \frac{{|{\bf c}(\lambda)|}^{-2}}{(\lambda^2+\rho^2)^{2m_o}} d\lambda\right)^{\frac{1}{2}} \\
&=& {\|\Delta^{m_0}f\|}_{L^2(\X)}\:\varphi_{-i\rho}(x)^{\frac{1}{2}}\:\left( \int_{\R} \frac{{|{\bf c}(\lambda)|}^{-2}}{(\lambda^2+\rho^2)^{2m_o}} d\lambda\right)^{\frac{1}{2}}\\
&<& \infty\:,
\end{eqnarray*}
%Then  and , 
%\begin{eqnarray*}
%\left|T_\psi f(x)\right| & \lesssim & {\|\Delta^{m_0}f\|}_{L^2(\X)}\:\left(\int_{\R} \frac{{|{\bf c}(\lambda)|}^{-2}}{(\lambda^2+\rho^2)^{2m_o}} d\lambda\right)^{\frac{1}{2}}\\
%&<& \infty\:,
%\end{eqnarray*}
and hence there exists a complex-valued function $\Theta$ on $\X$ such that for all $x \in \X$,  
\begin{equation*}
\left|T_\psi f(x)\right| = T_\psi f(x) \cdot \Theta(x)\:, \text{ with } \left|\Theta (x)\right|\equiv 1\:.
\end{equation*}

\medskip

Next, by Fubini's Theorem and the Cauchy-Schwarz inequality, we note that
\begin{eqnarray*}
&&\left\| T_{\psi}f\right\|_{L^1\left(B\left(x_0,\frac{1}{2}\right)\right)} \\
&=&\int_{B\left(x_0,\frac{1}{2}\right)} T_{\psi}f(x)\cdot \Theta(x)\: dx \\
&=& \frac{1}{|W|} \int_{\mathbb{B}} \int_{\R}
\tilde{f}(\lambda,b)\left(\int_{B\left(x_0,\frac{1}{2}\right)} e^{(i\lambda+\rho)A(x,b)}\:e^{it(x)\psi(\lambda)} \Theta(x)\:dx \right){|{\bf c}(\lambda)|}^{-2}\: d\lambda\:db \\
& \lesssim & {\|f\|}_{H^{\frac{1}{2}}(\X)}\left(\int_{\mathbb{B}}\int_{\R}
\left|\int_{B\left(x_0,\frac{1}{2}\right)} e^{(i\lambda+\rho)A(x,b)}\:e^{it(x)\psi(\lambda)}\Theta(x)\:dx \right|^2 \frac{{|{\bf c}(\lambda)|}^{-2}}{|\lambda|}\: d\lambda\:db\right)^{\frac{1}{2}}\:,
\end{eqnarray*}
and hence to prove (\ref{pf_eq1}), it suffices to show that
\begin{equation} \label{pf_eq2}
\int_{\mathbb{B}}\int_{\R}
\left|\int_{B\left(x_0,\frac{1}{2}\right)} e^{(i\lambda+\rho)A(x,b)}\:e^{it(x)\psi(\lambda)}\Theta(x)\:dx \right|^2 \frac{{|{\bf c}(\lambda)|}^{-2}}{|\lambda|}\: d\lambda\:db <\infty\:.
\end{equation}
In this direction, by Fubini's theorem and the functional equation (\ref{functional_eqn}), we see that
\begin{eqnarray*}
&& \int_{\mathbb{B}}\int_{\R}
\left|\int_{B\left(x_0,\frac{1}{2}\right)} e^{(i\lambda+\rho)A(x,b)}\:e^{it(x)\psi(\lambda)}\Theta(x)\:dx \right|^2 \frac{{|{\bf c}(\lambda)|}^{-2}}{|\lambda|}\: d\lambda\:db \\
&=& \int_{\mathbb{B}}\int_{\R}
\int_{B\left(x_0,\frac{1}{2}\right)} \int_{B\left(x_0,\frac{1}{2}\right)} e^{(i\lambda+\rho)A(x,b)}\:e^{it(x)\psi(\lambda)}\:\Theta(x)\:e^{(-i\lambda+\rho)A(y,b)}\:e^{-it(y)\psi(\lambda)}\:\overline{\Theta(y)} \\
&& \:\:\:\:\:\:\:\:\:\:\:\:\:\:\:\:\:\:\:\:\:\:\:\:\:\:\:\:\:\:\:\:\:\:\:\:\:\:\:\:\:\:\:\times  \frac{{|{\bf c}(\lambda)|}^{-2}}{|\lambda|}\:dx\:dy\: d\lambda\:db \\
&=& \int_{B\left(x_0,\frac{1}{2}\right)} \int_{B\left(x_0,\frac{1}{2}\right)} \Theta(x)\:\overline{\Theta(y)} \int_{\R} e^{i(t(x)-t(y))\psi(\lambda)}\left(\int_{\mathbb{B}} e^{(i\lambda+\rho)A(x,b)}\:e^{(-i\lambda+\rho)A(y,b)}\:db\right)\\
&& \:\:\:\:\:\:\:\:\:\:\:\:\:\:\:\:\:\:\:\:\:\:\:\:\:\:\:\:\:\:\:\:\times \frac{{|{\bf c}(\lambda)|}^{-2}}{|\lambda|}d\lambda\:dx\:dy\: \\
& \le & \int_{B\left(x_0,\frac{1}{2}\right)} \int_{B\left(x_0,\frac{1}{2}\right)} \left|\int_{\R} \varphi_\lambda(g^{-1}_yg_x \cdot o)\:e^{i(t(x)-t(y))\psi(\lambda)}\frac{{|{\bf c}(\lambda)|}^{-2}}{|\lambda|}d\lambda\right|\:dx\:dy\:,
\end{eqnarray*}
where $g_x,g_y \in G$ such that $x=g_xK$ and $y=g_yK$. Now as the spherical function, the phase  and ${|{\bf c}(\cdot)|}^{-2}$ all are even, the last integral simplies to
\begin{equation*}
2\int_{B\left(x_0,\frac{1}{2}\right)} \int_{B\left(x_0,\frac{1}{2}\right)} \left|\int_0^\infty \varphi_\lambda(g^{-1}_yg_x \cdot o)\:e^{i(t(x)-t(y))\psi(\lambda)}\frac{{|{\bf c}(\lambda)|}^{-2}}{\lambda}d\lambda\right|\:dx\:dy\:.
\end{equation*}
Thus to prove (\ref{pf_eq2}), it suffices to prove that
\begin{equation} \label{pf_eq3}
\int_{B\left(x_0,\frac{1}{2}\right)} \int_{B\left(x_0,\frac{1}{2}\right)} \left|\int_0^\infty \varphi_\lambda(g^{-1}_yg_x \cdot o)\:e^{i(t(x)-t(y))\psi(\lambda)}\frac{{|{\bf c}(\lambda)|}^{-2}}{\lambda}d\lambda\right|\:dx\:dy <\infty\:.
\end{equation}
Now as $\varphi_\lambda$ is radial and the distance function on $\X$ is left $G$-invariant, we have
\begin{equation*}
\varphi_\lambda(g^{-1}_yg_x \cdot o) = \varphi_\lambda(d(o,g^{-1}_yg_x \cdot o))=\varphi_\lambda(d(g_y \cdot o,g_x \cdot o))= \varphi_\lambda(d(y,x))\:.
\end{equation*}
Also as $x,y \in B\left(x_0,1/2\right)$, We note that 
\begin{equation} \label{pf_eq4}
d(y,x) \le d(y,x_0)+d(x,x_0) < 1\:.
\end{equation}
Then for a fixed pair $(x,y) \in B\left(x_0,1/2\right) \times B\left(x_0,1/2\right)$, setting $s=d(y,x)$ and $\tau=t(x)-t(y)$, we focus on the $\lambda$-integral in (\ref{pf_eq3}), that is,
\begin{equation*}
I:= \int_0^\infty \varphi_\lambda(s)\: e^{i\tau\psi(\lambda)}\:\frac{{|{\bf c}(\lambda)|}^{-2}}{\lambda}\:d\lambda\:.
\end{equation*}
We now specialize to $\X=\Hb^2$. Thus plugging ${|{\bf c}(\lambda)|}^{-2}=\lambda \tanh \left(\frac{\pi \lambda}{2}\right)$, the above integral simplifies to
\begin{equation*}
I= \int_0^\infty \varphi_\lambda(s)\: e^{i\tau\psi(\lambda)}\:\tanh \left(\frac{\pi \lambda}{2}\right)\:d\lambda\:.
\end{equation*} 
We now estimate the above oscillatory integral by introducing a smooth resolution of identity. More precisely, let $\eta \in C^\infty_c(\R)$ be even, non-negative such that
\begin{equation*}
Supp(\eta) \subset \left\{\xi \in \R \mid 1 < |\xi| < 4\right\}\:, \text{ and } \sum_{j=-\infty}^\infty \eta\left(2^{-j}\:\xi\right)=1\:,\text{ for } \xi \ne 0\:.
\end{equation*} 
This enables us to decompose $I$ as,
\begin{equation*}
I = \sum_{j=-\infty}^\infty \int_0^\infty \eta\left(2^{-j}\:\lambda\right) \varphi_\lambda(s)\: e^{i\tau\psi(\lambda)}\:\tanh \left(\frac{\pi \lambda}{2}\right)\:d\lambda\:,
\end{equation*}
which after the change of variable $\lambda \mapsto 2^j\lambda$, takes the form
\begin{equation*}
I  =\sum_{j=-\infty}^\infty 2^j \int_1^4 \eta\left(\lambda\right) \varphi_{2^j\lambda}(s)\: e^{i\tau\psi(2^j\lambda)}\:\tanh \left(2^{j-1}\pi \lambda\right)\:d\lambda = \sum_{j=-\infty}^\infty 2^j I_j\:.
\end{equation*}
We first estimate for low frequency. By the boundedness of $\varphi_\lambda$ given by (\ref{phi_lambda_bound}) and $\tanh$, we trivially have
\begin{equation*}
|I_j| \le 3\:.
\end{equation*}
And thus 
\begin{equation} \label{pf_eq5}
\left|\sum_{j=-\infty}^{\lceil -\log_2(s)\rceil} 2^j\:I_j\right| \lesssim \sum_{j=-\infty}^{\lceil -\log_2(s)\rceil} 2^j \lesssim s^{-1}\:.
\end{equation}
We now focus on high frequency, that is, on $j > \lceil -\log_2(s)\rceil$ which is equivalent to saying $2^j > s^{-1}$. By (\ref{pf_eq4}), $s <1$. Moreover since, $\lambda \in (1,4)$, consequently we have
\begin{equation*}
2^j\lambda s > 2^js>1\:.
\end{equation*}
Then by Lemma \ref{bessel_series_expansion}, for $M=1$, we have
\begin{equation} \label{pf_eq6}
\varphi_{2^j\lambda}(s)=c_0\left(\frac{s}{D(s)}\right)^{\frac{1}{2}} \left[J_0\left(2^j\lambda s\right)\:+\:a_1(s)s^2\frac{J_1\left(2^j\lambda s\right)}{2^j\lambda s}\right] + \mathcal{E}_1\left(2^j\lambda, s\right)\:,
\end{equation}
where 
\begin{equation} \label{pf_eq7}
\left|\mathcal{E}_1\left(2^j\lambda, s\right)\right| \lesssim s^4 \left(2^j\lambda s\right)^{-\frac{5}{2}}\:.
\end{equation}
Now by Lemma \ref{Bessel_fn_expansion}, we have complex numbers $z_1,\dots,z_6$ such that
\begin{eqnarray}\label{pf_eq8}
J_0\left(2^j\lambda s\right) &=& e^{i2^j\lambda s}\left\{z_1\left(2^j\lambda s\right)^{-\frac{1}{2}}+z_2\left(2^j\lambda s\right)^{-\frac{3}{2}}\right\} \nonumber\\
&& +\: e^{-i2^j\lambda s}\left\{z_3\left(2^j\lambda s\right)^{-\frac{1}{2}}+z_4\left(2^j\lambda s\right)^{-\frac{3}{2}}\right\} + \mathcal{E}_2(2^j\lambda s)\:, 
\end{eqnarray}
where 
\begin{equation} \label{pf_eq9}
\left|\mathcal{E}_2(2^j\lambda s)\right| \lesssim (2^j\lambda s)^{-\frac{5}{2}}\:.
\end{equation}
and
\begin{equation} \label{pf_eq10}
\frac{J_1\left(2^j\lambda s\right)}{2^j\lambda s}= z_5\: e^{i2^j\lambda s}\left(2^j\lambda s\right)^{-\frac{3}{2}}\: +\: z_6\: e^{-i2^j\lambda s}\left(2^j\lambda s\right)^{-\frac{3}{2}} + \mathcal{E}_3(2^j\lambda s)\:, 
\end{equation}
where 
\begin{equation} \label{pf_eq11}
\left|\mathcal{E}_3(2^j\lambda s)\right| \lesssim (2^j\lambda s)^{-\frac{5}{2}}\:.
\end{equation}
Therefore, plugging (\ref{pf_eq8})-(\ref{pf_eq11}) in (\ref{pf_eq6}), we get that
\begin{eqnarray} \label{pf_eq12}
\varphi_{2^j\lambda}(s)&=& e^{i2^j\lambda s}\left\{\theta_1(s)\left(2^j\lambda s\right)^{-\frac{1}{2}}\:+\:\theta_2(s)\left(2^j\lambda s\right)^{-\frac{3}{2}}\right\} \nonumber\\
&& +\: e^{-i2^j\lambda s}\left\{\theta_3(s)\left(2^j\lambda s\right)^{-\frac{1}{2}}\:+\:\theta_4(s)\left(2^j\lambda s\right)^{-\frac{3}{2}}\right\} + \mathcal{E}(2^j\lambda, s)\:,
\end{eqnarray}
where the complex-valued functions $\theta_j$ (for $j=1, \cdots, 4$) satisfy,
\begin{equation*}
|\theta_j(s)| \lesssim 1\:,\:\text{ for } s<1\:,
\end{equation*}
and 
\begin{equation*}
\mathcal{E}(2^j\lambda, s) = \mathcal{E}_1(2^j\lambda, s) +  c_0\left(\frac{s}{D(s)}\right)^{\frac{1}{2}} \left[\mathcal{E}_2(2^j\lambda s)\:+\: a_1(s)\:s^2\:\mathcal{E}_3(2^j\lambda s)\right]\:.
\end{equation*}
Thus by (\ref{pf_eq7}), (\ref{pf_eq9}), (\ref{pf_eq11}) and the local growth asymptotics of the density function given by (\ref{density}), it follows that
\begin{equation} \label{pf_eq13}
\left|\mathcal{E}(2^j\lambda, s)\right| \lesssim (2^j\lambda s)^{-\frac{5}{2}}\:.
\end{equation}
Thus for $j > \lceil -\log_2(s)\rceil$, invoking the decomposition (\ref{pf_eq12}) in the definition of $I_j$, we write,
\begin{eqnarray*}
I_j &=& \theta_1(s)\:(2^js)^{-\frac{1}{2}}\int_1^4 e^{i\{2^j\lambda s \:+\: \tau\psi(2^j\lambda)\}}\: \eta\left(\lambda\right)\:\lambda^{-\frac{1}{2}}\: \tanh \left(2^{j-1}\pi \lambda\right)\:d\lambda \\
&&+\: \theta_2(s)\:(2^js)^{-\frac{3}{2}}\int_1^4 e^{i\{2^j\lambda s \:+\: \tau\psi(2^j\lambda)\}}\: \eta\left(\lambda\right)\:\lambda^{-\frac{3}{2}}\: \tanh \left(2^{j-1}\pi \lambda\right)\:d\lambda \\
&&+ \:\theta_3(s)\:(2^js)^{-\frac{1}{2}}\int_1^4 e^{i\{-2^j\lambda s \:+\: \tau\psi(2^j\lambda)\}}\: \eta\left(\lambda\right)\:\lambda^{-\frac{1}{2}}\: \tanh \left(2^{j-1}\pi \lambda\right)\:d\lambda \\
&&+\: \theta_4(s)\:(2^js)^{-\frac{3}{2}}\int_1^4 e^{i\{-2^j\lambda s \:+\: \tau\psi(2^j\lambda)\}}\: \eta\left(\lambda\right)\:\lambda^{-\frac{3}{2}}\: \tanh \left(2^{j-1}\pi \lambda\right)\:d\lambda \\
&&+\: \int_1^4 \eta\left(\lambda\right) \mathcal{E}(2^j\lambda, s)\: e^{i\tau\psi(2^j\lambda)}\:\tanh \left(2^{j-1}\pi \lambda\right)\:d\lambda\: \\
&=& \sum_{m=1}^5 I^m_j\:.
\end{eqnarray*}
For $I^5_j$, by the pointwise estimate (\ref{pf_eq13}), we have
\begin{equation*}
\left|I^5_j\right| \lesssim (2^js)^{-\frac{5}{2}}\:,
\end{equation*}
and hence as $2^j>s^{-1}$, we get that
\begin{equation} \label{pf_eq14}
\left|\sum_{j>\lceil -\log_2(s)\rceil}^\infty 2^j\:I^5_j\right| \lesssim s^{-\frac{5}{2}}\sum_{j>\lceil -\log_2(s)\rceil}^\infty 2^{-\frac{3j}{2}} \lesssim s^{-\frac{5}{2}}\: s^{\frac{3}{2}}  = s^{-1}\:.
\end{equation}
We next take care of $I^m_j$ for $m=1,\dots,4$, simultaneously, by considering for $l=0$ or $1$, the oscillatory integral
\begin{equation*}
\int_1^4 e^{iF_s(\lambda)}\:H(\lambda)\:d\lambda\:,
\end{equation*}
where
\begin{equation*}
F_s(\lambda)=\pm 2^j\lambda s + \tau \psi\left(2^j \lambda\right)\:, \text{ and } H(\lambda)= \eta(\lambda)\:\lambda^{-\left(\frac{1}{2}+l\right)}\:\tanh \left(2^{j-1}\pi \lambda\right)\:.
\end{equation*}
We note that
\begin{eqnarray} \label{pf_eq15}
&& F'_s(\lambda)=\pm 2^js\:+\: 2^j \tau \psi'\left(2^j \lambda\right)\:,\\
\label{pf_eq16}
&& F''_s(\lambda)= 2^{2j} \tau \psi''\left(2^j \lambda\right)\:, \\
\label{pf_eq17}
&& F'''_s(\lambda)= 2^{3j} \tau \psi'''\left(2^j \lambda\right)\:.
\end{eqnarray}
We now recall the properties of the phase function $\psi$ given by (\ref{phase_fn_properties}): for $\lambda \in (1,4)$ and $j > \lceil -\log_2(s)\rceil$, there exist $a>1$ and positive constants $C_1,C_2,C_3 \ge 1$, such that
\begin{eqnarray} \label{pf_eq18}
&&\frac{1}{C_1} \left(2^j \lambda\right)^{a-1} \le \left|\psi'\left(2^j \lambda\right)\right| \le C_1 \left(2^j \lambda\right)^{a-1}\:, \\
\label{pf_eq19}
&&\frac{1}{C_2} \left(2^j \lambda\right)^{a-2} \le \left|\psi''\left(2^j \lambda\right)\right| \le C_2 \left(2^j \lambda\right)^{a-2}\:, \\
\label{pf_eq20}
&&\left|\psi'''\left(2^j \lambda\right)\right| \le C_3 \left(2^j \lambda\right)^{a-3}\:.
\end{eqnarray}
Thus combining (\ref{pf_eq16}) and (\ref{pf_eq17}) with the upper bounds in (\ref{pf_eq19}) and (\ref{pf_eq20}), it follows that
\begin{eqnarray} \label{pf_eq21}
&&\left|F''_s(\lambda)\right| \lesssim 2^{aj}|\tau|\:, \\
\label{pf_eq22}
&&\left|F'''_s(\lambda)\right| \lesssim 2^{aj}|\tau|\:.
\end{eqnarray}

Setting $\zeta(\lambda)=\eta(\lambda)\:\lambda^{-\left(\frac{1}{2}+l\right)}$, we also have
\begin{eqnarray*}
 H'(\lambda)&=&\zeta'(\lambda)\tanh \left(2^{j-1}\pi \lambda\right)\:+\:\left(2^{j-1}\pi\right)\zeta(\lambda)\cosh^{-2} \left(2^{j-1}\pi \lambda\right)\:,\\
H''(\lambda)&=&\zeta''(\lambda)\tanh \left(2^{j-1}\pi \lambda\right)\:+\:\left(2^j\pi\right)\zeta'(\lambda)\cosh^{-2} \left(2^{j-1}\pi \lambda\right)\\
&&+\: \left(2^{2j-1}\pi^2\right)\zeta(\lambda)\cosh^{-2} \left(2^{j-1}\pi \lambda\right)\tanh \left(2^{j-1}\pi \lambda\right)                  ,
\end{eqnarray*}
and hence for $\lambda \in (1,4)$ and $j > \lceil -\log_2(s)\rceil$,
\begin{equation} \label{pf_eq23}
\left|H^{(p)}(\lambda)\right| \lesssim 1\:,\:\text{ for } p=0,1,2\:,
\end{equation}
with the implicit constants being independent of $j$. 

We now proceed with the oscillatory integral estimate by first considering
\begin{equation*}
\Jc_1 := \left\{j > \lceil -\log_2(s)\rceil \mid 2^js \le 2^{(aj-1)}C^{-1}_1 |\tau|\right\}\:.
\end{equation*}
For $j \in \Jc_1$, by (\ref{pf_eq15}), the triangle inequality, the lower bound in (\ref{pf_eq18}) and the facts that $\lambda \in (1,4)$ and $a>1$, it follows that
\begin{eqnarray} \label{pf_eq24}
\left|F'_s(\lambda)\right| & \ge & 2^j|\tau|\left|\psi'\left(2^j\lambda\right)\right| - 2^js \nonumber\\
& \ge & 2^j C^{-1}_1|\tau|\left(2^j \lambda\right)^{a-1} - 2^js \nonumber\\
& > & 2^{aj} C^{-1}_1|\tau| - 2^js \nonumber\\
& \ge & 2^{aj} C^{-1}_1|\tau| - 2^{(aj-1)}C^{-1}_1 |\tau| \nonumber\\
&=& 2^{(aj-1)}C^{-1}_1 |\tau|\:.
\end{eqnarray} 
Then by performing integration-by-parts twice and using (\ref{pf_eq21})-(\ref{pf_eq24}), we obtain
\begin{equation*}
\left|\int_1^4 e^{iF_s(\lambda)}\:H(\lambda)\:d\lambda\right| \lesssim \left(2^{aj}|\tau|\right)^{-2}\:,
\end{equation*}
which yields for $m=1,\dots,4$ and $l=0,1$,
\begin{eqnarray} \label{pf_eq25}
\left|\sum_{j \in \Jc_1} 2^j\:I^m_j\right| & \lesssim & s^{-\left(\frac{1}{2}+l\right)} \sum_{j \in \Jc_1}  2^j\:  2^{-j\left(\frac{1}{2}+l\right)}\:\left(2^{aj}|\tau|\right)^{-2} \nonumber\\
& \lesssim & s^{-\left(\frac{1}{2}+l\right)} \sum_{j \in \Jc_1}  2^j\:  2^{-j\left(\frac{1}{2}+l\right)}\:\left( 2^js\right)^{-2} \nonumber\\
& \le & s^{-\left(\frac{5}{2}+l\right)} \sum_{j>\lceil -\log_2(s)\rceil}^\infty 2^{-j\left(\frac{3}{2}+l\right)} \nonumber\\
& \lesssim & s^{-\left(\frac{5}{2}+l\right)}\: s^{\left(\frac{3}{2}+l\right)} \nonumber\\
&=& s^{-1}\:.
\end{eqnarray}
Next, we focus on 
\begin{equation*}
\Jc_2 := \left\{j > \lceil -\log_2(s)\rceil \mid 2^js \ge 2^{(aj+1)}4^{a-1}C_1 |\tau|\right\}\:.
\end{equation*}
For $j \in \Jc_2$, by (\ref{pf_eq15}), the triangle inequality, the upper bound in (\ref{pf_eq18}) and the facts that $\lambda \in (1,4)$ and $a>1$, it follows that
\begin{eqnarray} \label{pf_eq26}
\left|F'_s(\lambda)\right| & \ge & 2^js - 2^j|\tau|\left|\psi'\left(2^j\lambda\right)\right| \nonumber\\
& \ge & 2^js - 2^j C_1 |\tau|\left(2^j \lambda\right)^{a-1} \nonumber\\
& > &  2^js - 2^{aj}4^{a-1}C_1|\tau| \nonumber\\
& \ge & 2^js - 2^{j-1}s \nonumber\\
&=& 2^{j-1}s\:.
\end{eqnarray}
Then by performing integration-by-parts twice and using (\ref{pf_eq21})-(\ref{pf_eq23}) and (\ref{pf_eq26}), while keeping in mind the definition of $\Jc_2$, we obtain
\begin{equation*}
\left|\int_1^4 e^{iF_s(\lambda)}\:H(\lambda)\:d\lambda\right| \lesssim \left(2^js\right)^{-2}\:,
\end{equation*}
which yields for $m=1,\dots,4$ and $l=0,1$,
\begin{eqnarray} \label{pf_eq27}
\left|\sum_{j \in \Jc_2} 2^j\:I^m_j\right|
& \lesssim & s^{-\left(\frac{1}{2}+l\right)} \sum_{j \in \Jc_2}  2^j\:  2^{-j\left(\frac{1}{2}+l\right)}\:\left( 2^js\right)^{-2} \nonumber\\
& \le & s^{-\left(\frac{5}{2}+l\right)} \sum_{j>\lceil -\log_2(s)\rceil}^\infty 2^{-j\left(\frac{3}{2}+l\right)} \nonumber\\
& \lesssim & s^{-\left(\frac{5}{2}+l\right)}\: s^{\left(\frac{3}{2}+l\right)} \nonumber\\
&=& s^{-1}\:.
\end{eqnarray}
Finally, we consider
\begin{equation*}
\Jc_3 := \left\{j > \lceil -\log_2(s)\rceil \mid 2^{(aj-1)}C^{-1}_1 |\tau| < 2^js < 2^{(aj+1)}4^{a-1}C_1 |\tau|\right\}\:.
\end{equation*}
From the above definition, it immediately follows that the set $\Jc_3$ is finite, moreover its cardinality is bounded by, 
\begin{equation} \label{pf_eq28}
\left|\Jc_3\right| \le \left(\frac{2}{a-1}\right)\log_2(2C_1)\:+\:2\:.
\end{equation}
Now for $j > \lceil -\log_2(s)\rceil$ and $\lambda \in (1,4)$, by (\ref{pf_eq16}) and the lower bound in (\ref{pf_eq19}), we have
\begin{equation*}
\left|F''_s(\lambda)\right| \ge \left(C^{-1}_2 \min\left\{1\:,\:4^{a-2}\right\}\right)2^{aj}|\tau|\:.
\end{equation*}
Combining this with (\ref{pf_eq23}), it follows from Van der Corput (for instance, see the proof of \cite[Proposition 2, pp. 332-333]{BigStein}) that
\begin{equation*}
\left|\int_1^4 e^{iF_s(\lambda)}\:H(\lambda)\:d\lambda\right| \lesssim \left(2^{aj}|\tau|\right)^{-\frac{1}{2}}\:.
\end{equation*}
The above inequality along with the facts that $2^js>1$ and (\ref{pf_eq28}), while keeping in mind the definition of $\Jc_3$, then yields for $m=1,\dots,4$ and $l=0,1$,
\begin{eqnarray} \label{pf_eq29}
\left|\sum_{j \in \Jc_3} 2^j\:I^m_j\right|
& \lesssim &s^{-\left(\frac{1}{2}+l\right)} \sum_{j \in \Jc_3}  2^j\:  2^{-j\left(\frac{1}{2}+l\right)}\:\left(2^{aj}|\tau|\right)^{-\frac{1}{2}} \nonumber\\
& \lesssim &s^{-\left(\frac{1}{2}+l\right)} \sum_{j \in \Jc_3}  2^j\:  2^{-j\left(\frac{1}{2}+l\right)}\:\left(2^js\right)^{-\frac{1}{2}} \nonumber\\
&=& s^{-1} \sum_{j \in \Jc_3} \left(2^js\right)^{-l} \nonumber\\
& \le & s^{-1} \: \left|\Jc_3\right| \nonumber\\
& \lesssim & s^{-1} \:.
\end{eqnarray}

So combining (\ref{pf_eq5}), (\ref{pf_eq14}), (\ref{pf_eq25}),  (\ref{pf_eq27}) and (\ref{pf_eq29}), we obtain
\begin{equation*}
\left|I\right| \lesssim  s^{-1} \:,
\end{equation*}
which in turn yields
\begin{eqnarray*} 
&&\int_{B\left(x_0,\frac{1}{2}\right)} \int_{B\left(x_0,\frac{1}{2}\right)} \left|\int_0^\infty \varphi_\lambda(g^{-1}_yg_x \cdot o)\:e^{i(t(x)-t(y))\psi(\lambda)}\frac{{|{\bf c}(\lambda)|}^{-2}}{\lambda}d\lambda\right|\:dx\:dy \\
& \lesssim & \int_{B\left(x_0,\frac{1}{2}\right)} \int_{B\left(x_0,\frac{1}{2}\right)} \frac{dx\:dy}{d(x,y)}\:.
\end{eqnarray*}
By the left $G$-invariance of the distance function and the volume measure, we have
\begin{equation*}
\int_{B\left(x_0,\frac{1}{2}\right)} \int_{B\left(x_0,\frac{1}{2}\right)} \frac{dx\:dy}{d(x,y)} = \int_{B\left(o,\frac{1}{2}\right)} \int_{B\left(o,\frac{1}{2}\right)} \frac{dx\:dy}{d(x,y)}\:.
\end{equation*}
Then by Fubini-Tonelli's theorem, another application of the left $G$-invariance of the distance function and the volume measure and the local growth asymptotics of the density function (\ref{density}), it follows that
\begin{eqnarray*}
\int_{B\left(o,\frac{1}{2}\right)} \int_{B\left(o,\frac{1}{2}\right)} \frac{dx\:dy}{d(x,y)} &=& \int_{B\left(o,\frac{1}{2}\right)} \left(\int_{B\left(o,\frac{1}{2}\right)} \frac{dy}{d(x,y)}\right)dx \\
& \le & \int_{B\left(o,\frac{1}{2}\right)} \left(\int_{B\left(x,1\right)} \frac{dy}{d(x,y)}\right)dx \\
& = & \int_{B\left(o,\frac{1}{2}\right)} \left(\int_{B\left(o,1\right)} \frac{dy}{d(o,y)}\right)dx \\
& \lesssim & 1\:,
\end{eqnarray*} 
and hence, we obtain (\ref{pf_eq3}). This completes the proof of Theorem \ref{thm2}. \qed

\section{Concluding remarks}
We now sketch some possible directions for further research:
\begin{enumerate}
\item On $\Hb^2$, Theorem \ref{thm1} establishes the sufficiency of the bound $\beta \ge 1/2$. On the other hand, for radial initial data, the Carleson's problem for dispersive equations on $\Hb^2$ has recently been fully resolved \cite{Dewan, DR, Dewan2} and in particular, yields the necessity of the regularity threshold $\beta \ge 1/4$. So it will be interesting to seek the critical Sobolev index $\beta_0 \in [1/4,1/2]$ for solving the Carleson's problem. For the particular case of the Schr\"odinger equation at least, keeping in mind the results for $\R^2$ \cite{Bourgain, DGL}, a reasonable guess would be to conjecture $\beta_0=1/3$. 

\medskip

\item It would be interesting to see whether the sufficiency of the bound $\beta \ge 1/2$ continues to hold true for higher dimensional Real Hyperbolic spaces or more generally, for the class of rank one Riemannian Symmetric spaces of non-compact type.
\end{enumerate}

\section*{Acknowledgements} 
The author would like to thank Swagato K. Ray for suggestions. The author is supported by a research fellowship of Indian Statistical Institute.

\bibliographystyle{amsplain}

\end{document}